\newcommand{\NN}{\mathbb{N}}
\newcommand{\s}{\mathcal{S}}
\newcommand{\p}{\varphi}
\newcommand{\g}{\mathcal{G}}
\newcommand{\Ef}{\widehat E_0}
\newcommand{\Et}{\widehat E_{\text{tight}}}
\newcommand{\gt}{\mathcal{G}_{\text{tight}}}
\theoremstyle{definition}
\newtheorem{theo}{Theorem}[section]
\newtheorem{rmk}[theo]{Remark}
\newtheorem{lem}[theo]{Lemma}
\newtheorem{defn}[theo]{Definition}
\newtheorem{prop}[theo]{Proposition}
\date{}
\begin{document}
\title{Amenable actions of inverse semigroups}
\author{Ruy Exel\thanks{Partially supported by CNPq (Brazil).} \hspace{1cm} Charles Starling\thanks{Supported by CNPq (Brazil).}}
\maketitle

\begin{abstract}
We say that an action of a countable discrete inverse semigroup on a locally compact Hausdorff space is amenable if its groupoid of germs is amenable in the sense of Anantharaman-Delaroche and Renault. We then show that for a given inverse semigroup $\s$, the action of $\s$ on its spectrum is amenable if and only if every action of $\s$ is amenable. 
\end{abstract}

\section{Introduction}
There are numerous ways to generalize the notion of a group. One such generalization is that of an {\em inverse semigroup}, that is, a semigroup $\s$ for which each $s\in\s$ has a unique ``inverse'' $s^*$, in the sense that $ss^*s = s$ and $s^*ss^* = s^*$. In this note, we address the question of what it means for an inverse semigroup to be amenable.

Amenability of discrete groups is an active and lively area of research. There are many equivalent definitions for what it means for a group to be amenable, and so those who attempt to define amenability for inverse semigroups have had many potential definitions to choose from. As discussed in \cite{Pa99} and \cite{Mi10}, some of the more familiar notions of amenability for groups, such as the existence of a left translation-invariant mean, produce unsatisfactory answers when applied to inverse semigroups. The definition of group amenability that motivates this work is given by the following equivalent statements for a discrete group $G$:
\begin{itemize}\addtolength{\itemsep}{-0.5\baselineskip}
\item[(a)] $G$ is amenable;
\item[(b)] the action of $G$ on a point is amenable;
\item[(c)] every action of $G$ on a locally compact Hausdorff space is amenable;
\end{itemize}
(see \cite[Example 2.7]{AD02} for the definitions and details).

In generalizing the above, we first define what it means for an action of an inverse semigroup to be amenable,
Definition \ref{amenableactiondef}. Essentially, an action is amenable if its {\em groupoid of germs} \eqref{germs} is
amenable in the sense of \cite{AR00}. We then show (Theorem \ref{maintheorem}) that (b) and (c) above are equivalent for
an inverse semigroup when ``a point'' in (b) is replaced with ``its spectrum''. This change is natural for two
reasons. The first is that if a group is viewed as an inverse semigroup, then its spectrum is a point. The second is
that the action of an inverse semigroup on a point may not be well-defined. In this way, we believe that amenability of the action of a inverse semigroup on its spectrum is a good candidate for the definition of amenability.

In \cite{Mi10}, it is argued that {\em weak containment} is a natural notion of amenability for an inverse semigroup. The conditions of Theorem \ref{maintheorem} imply weak containment (see Remark \ref{weakcontainment}), however a recent example of Willett \cite{Wi15} suggests that the converse situation might be more delicate.

This short note is organized as follows. Before proving our main result, in Section 2 we define the notion of a {\em $d$-bijective groupoid homomorphism} and show that the existence of such a map from a groupoid $\g$ into an amenable groupoid implies that $\g$ is also amenable. This is used in proving our main result in Section 3; to prove Theorem \ref{maintheorem} we construct a $d$-bijective map between the two relevant groupoids.

\section{Amenable groupoids and $d$-bijective homomorphisms}
In this section we prove a preliminary result about amenability of \'etale groupoids. After giving the necessary background, we define a certain type of groupoid homomorphism, such that if we have such a map from a groupoid $\g$ to an amenable groupoid, then $\g$ must also be amenable.

Recall that a {\em groupoid} is a set $\g$ with a distinguished subset $\g^{(2)} \subset \g \times \g$, called the set of {\em composable pairs}, a product map $\g^{(2)} \to \g$ with $(\gamma, \eta)\mapsto \gamma\eta$, and an inverse map from $\g$ to $\g$ with $\gamma \mapsto \gamma^{-1}$ such that
\begin{enumerate}\addtolength{\itemsep}{-0.5\baselineskip}
\item $(\gamma^{-1})^{-1} = \gamma$ for all $\gamma\in \g$,
\item If $(\gamma, \eta), (\eta, \nu)\in \g^{(2)}$, then $(\gamma\eta,\nu), (\gamma, \eta\nu)\in \g^{(2)}$ and $(\gamma\eta)\nu = \gamma(\eta\nu)$,
\item $(\gamma, \gamma^{-1}), (\gamma^{-1},\gamma)\in \g^{(2)}$, and $\gamma^{-1}\gamma\eta = \eta$, $\xi\gamma\gamma^{-1} = \xi$ for all $\eta, \xi$ with $(\gamma, \eta), (\xi,\gamma) \in \g^{(2)}$.
\end{enumerate}
The set of {\em units} of $\g$ is the subset $\g^{(0)}$ of elements of the form $\gamma\gamma^{-1}$. The maps $r: \g\to \g^{(0)}$ and $d:\g\to \g^{(0)}$ given by
\[
r(\gamma) = \gamma\gamma^{-1}, \hspace{1cm} d(\gamma) = \gamma^{-1}\gamma
\]
are called the {\em range} and {\em source} maps respectively. One sees that $(\gamma, \eta)\in \g^{(2)}$ is equivalent to
$r(\eta) = d(\gamma)$. One thinks of a groupoid $\g$ as a set of ``arrows'' between elements of $\g^{(0)}$. An arrow $\gamma$ ``starts'' at $d(\gamma)$ and ``ends'' at $r(\gamma)$.

A map $\varphi: \g\to \mathcal{H}$ between groupoids is called a {\em groupoid homomorphism} if $(\gamma, \eta)\in \g^{(2)}$ implies that $(\varphi(\gamma), \varphi(\eta))\in \mathcal{H}^{(2)}$ and $\varphi(\gamma\eta) = \varphi(\gamma)\varphi(\eta)$. A short calculation shows that this implies that $\varphi(\gamma^{-1}) = \varphi(\gamma)^{-1}$, and so $\varphi(\g^{(0)}) \subset \mathcal{H}^{(0)}$, $r\circ\varphi = \varphi\circ r$, and $d\circ\varphi = \varphi \circ d$.

A {\em topological groupoid} is a groupoid which is a topological space where the inverse and product maps are
continuous, where we are considering $\g^{(2)}$ with the product topology inherited from $\g\times\g$. A topological
groupoid is called {\em \'etale} if it is locally compact, its unit space is Hausdorff, and the range and source maps are local homeomorphisms. These properties imply that $\g^{(0)}$ is open. Furthermore, in a second countable \'etale groupoid, the spaces
\[
\g_x : = d^{-1}(x), \hspace{1cm} \g^x := r^{-1}(x)
\]
are discrete for all $x\in \g^{(0)}$. We note that an \'etale groupoid may not be Hausdorff, even though we always assume the unit space is.

\def\comment#1{(\color{red}#1\color{black})}

The following theorem from \cite[Corollary 3.3.8]{AR00} will be used  as our  definition of amenability for a second countable \'etale groupoid.

\begin{theo}\label{amenablegroupoid}
Let $\mathcal{G}$ be a second countable \'etale groupoid. The following are equivalent:
\begin{enumerate}\addtolength{\itemsep}{-0.5\baselineskip}
\item $\mathcal{G}$ is amenable.
\item There exists a sequence $(g_n)$ of Borel functions on $\g$ 
such that
\begin{enumerate}\addtolength{\itemsep}{-0.5\baselineskip}
\item the function $x\mapsto \sum_{r(\gamma) = x}|g_n(\gamma)|$ is bounded;
\item for all $x\in \mathcal{G}^{(0)}$ and $n\in\NN$ we have $\sum_{r(\gamma) = x}g_n(\gamma) = 1$; and
\item for all $\gamma\in\mathcal{G}$ the sequence
\[
\sum_{r(\gamma) = r(\eta)}\left|g_n(\gamma^{-1}\eta) - g_n(\eta)\right|
\]
converges to 0 with $n$.
\end{enumerate}
\item There exists a sequence $(f_n)$ of Borel functions on $\g$ 
such that
\begin{enumerate}\addtolength{\itemsep}{-0.5\baselineskip}
\item the function $x\mapsto \sum_{d(\gamma) = x}|f_n(\gamma)|$ is bounded;
\item  for all $x\in \mathcal{G}^{(0)}$ and $n\in\NN$ we have $\sum_{d(\gamma) = x}f_n(\gamma) = 1$; and
\item for all $\gamma\in\mathcal{G}$ the sequence
\[
\sum_{d(\gamma) = d(\eta)}\left|f_n(\eta\gamma^{-1}) - f_n(\eta)\right|
\]
converges to 0 with $n$.
\end{enumerate}
\end{enumerate}
\end{theo}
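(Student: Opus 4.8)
The plan is to treat the two nontrivial implications separately: the equivalence $(1)\Leftrightarrow(2)$, which carries the real analytic weight and which I would simply extract from the general theory, and the equivalence $(2)\Leftrightarrow(3)$, which is a purely formal symmetry that I would verify by hand.

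For $(1)\Leftrightarrow(2)$ the point is that our groupoid is second countable and \'etale, so it carries a canonical Haar system consisting of the counting measures on the discrete fibres $\g^x = r^{-1}(x)$. Under this identification the functions $g_n$ of $(2)$ are exactly the fibrewise $\ell^1$ densities of an approximately invariant sequence of probability measures: condition $(b)$ says that each $g_n$ restricts to a probability density on every fibre $\g^x$, condition $(a)$ is the required uniform $\ell^1$ bound, and condition $(c)$ is the Reiter-type approximate left invariance. With these translations in place, $(1)\Leftrightarrow(2)$ is precisely the content of \cite[Corollary 3.3.8]{AR00}, and I would invoke it directly rather than reprove it.

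The equivalence $(2)\Leftrightarrow(3)$ I would obtain from the involution $\gamma\mapsto\gamma^{-1}$, which interchanges the range and source maps. Given a sequence $(g_n)$ as in $(2)$, set $f_n(\gamma)=g_n(\gamma^{-1})$. Since $d(\gamma)=x$ is equivalent to $r(\gamma^{-1})=x$, the map $\gamma\mapsto\gamma^{-1}$ is a bijection of $d^{-1}(x)$ onto $r^{-1}(x)$, so $(a)$ and $(b)$ for $(f_n)$ follow at once from $(a)$ and $(b)$ for $(g_n)$. For $(c)$, fix $\gamma$ and note that on the summation domain $d(\eta)=d(\gamma)$ the product $\eta\gamma^{-1}$ is composable and $f_n(\eta\gamma^{-1})=g_n(\gamma\eta^{-1})$, $f_n(\eta)=g_n(\eta^{-1})$. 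Writing $\eta'=\eta^{-1}$ and $\beta=\gamma^{-1}$, the condition $d(\eta)=d(\gamma)$ becomes $r(\eta')=r(\beta)$, and the sum reads $\sum_{r(\eta')=r(\beta)}\bigl|g_n(\beta^{-1}\eta')-g_n(\eta')\bigr|$, which tends to $0$ by $(c)$ of $(2)$ applied to $\beta$. As $\gamma$ ranges over $\g$ so does $\beta$, giving $(3)$; the reverse implication is identical because inversion is an involution.

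The main obstacle is entirely contained in $(1)\Leftrightarrow(2)$: the passage from abstract (topological) amenability to a \emph{Borel} approximately invariant system requires the measurable selection and disintegration arguments of \cite[\S3.3]{AR00}, and this is the only genuinely hard part; everything in $(2)\Leftrightarrow(3)$ is symmetric bookkeeping. Since the statement is quoted verbatim as our working definition of amenability, the pragmatic route is to cite \cite[Corollary 3.3.8]{AR00} for $(1)\Leftrightarrow(2)$ and to supply only the short inversion argument above for $(2)\Leftrightarrow(3)$.
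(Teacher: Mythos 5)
Your proposal is correct and follows the paper's own route exactly: the paper likewise cites \cite[Corollary 3.3.8]{AR00} for $(1)\Leftrightarrow(2)$ and disposes of $(2)\Leftrightarrow(3)$ by ``composing the given functions with the groupoid inverse map and redefining variables,'' which is precisely your substitution $f_n(\gamma)=g_n(\gamma^{-1})$, $\eta'=\eta^{-1}$, $\beta=\gamma^{-1}$. In fact your write-up spells out the change of variables in more detail than the paper does (only omitting the trivial remark that $f_n$ is Borel because inversion is continuous), so no changes are needed.
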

\begin{proof}
(1)$\Leftrightarrow$(2) is \cite[Corollary 3.3.8]{AR00}, and (2)$\Leftrightarrow$(3) follows by composing the given functions with the groupoid inverse map and redefining variables.
\end{proof}

We now define a type of groupoid homomorphism which arises naturally when considering inverse semigroup actions.

\begin{defn}\label{dbijectivedef} Let $\g$ and $\mathcal{H}$ be groupoids. We say that a groupoid homomorphism $\varphi:
\g\to \mathcal{H}$ is {\em $d$-bijective} (or {\em source-bijective}) if for all $x\in \g^{(0)}$, the restriction
$\varphi: \g_x \to \mathcal{H}_{\varphi(x)}$ is bijective. We will say that $\varphi$ is {\em $r$-bijective} (or {\em
range-bijective}) if for all $x\in \g^{(0)}$, the restriction $\varphi: \g^x \to \mathcal{H}^{\varphi(x)}$ is bijective.
\end{defn}

The definition of a groupoid is symmetric with respect to the range and source map, so the following should not be surprising.

\begin{lem}
A map $\varphi:\g\to \mathcal{H}$ is $d$-bijective if and only if it is $r$-bijective. 
\end{lem}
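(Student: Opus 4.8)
The plan is to exploit the fact that inversion provides a canonical bijection between each source fibre $\g_x$ and the corresponding range fibre $\g^x$, together with the observation (recorded just after the definition of a groupoid homomorphism) that $\varphi$ intertwines the two inverse maps. First I would pin down the two elementary facts that drive the argument. For each unit $x\in\g^{(0)}$, inversion restricts to a bijection $\g_x\to\g^x$, $\gamma\mapsto\gamma^{-1}$, whose inverse is again given by inversion; this is because $d(\gamma)=x$ holds exactly when $r(\gamma^{-1})=d(\gamma)=x$, and $(\gamma^{-1})^{-1}=\gamma$. The identical statement holds in $\mathcal{H}$ at the unit $\varphi(x)$. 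Second, since $\varphi(\gamma^{-1})=\varphi(\gamma)^{-1}$ and $r\circ\varphi=\varphi\circ r$, the map $\varphi$ carries $\g^x$ into $\mathcal{H}^{\varphi(x)}$ and commutes with the inverse maps of $\g$ and $\mathcal{H}$.

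Next I would assemble these into a commuting square. Fixing $x$, consider the square whose top edge is the restriction $\varphi\colon\g_x\to\mathcal{H}_{\varphi(x)}$, whose bottom edge is the restriction $\varphi\colon\g^x\to\mathcal{H}^{\varphi(x)}$, and whose two vertical edges are the inversion bijections $\g_x\to\g^x$ and $\mathcal{H}_{\varphi(x)}\to\mathcal{H}^{\varphi(x)}$. The identity $\varphi(\gamma^{-1})=\varphi(\gamma)^{-1}$ says precisely that this square commutes, so the bottom edge is obtained from the top edge by pre- and post-composing with the two vertical bijections. Consequently $\varphi\colon\g^x\to\mathcal{H}^{\varphi(x)}$ is a bijection if and only if $\varphi\colon\g_x\to\mathcal{H}_{\varphi(x)}$ is a bijection. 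Quantifying over all $x\in\g^{(0)}$ yields that $\varphi$ is $r$-bijective if and only if it is $d$-bijective, which is the claim; the two implications are symmetric, so a single diagram settles both.

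The main obstacle here is essentially nonexistent beyond careful bookkeeping: the only points requiring attention are to verify that inversion really does map $\g_x$ \emph{onto} $\g^x$ (via $r(\gamma^{-1})=d(\gamma)$ and $d(\gamma^{-1})=r(\gamma)$) and that $\varphi$ lands in the correct fibre $\mathcal{H}^{\varphi(x)}$ (via $r\circ\varphi=\varphi\circ r$), so that the commuting square is well-defined on the intended fibres. Once the square is correctly set up, the equivalence is immediate from the fact that conjugation by bijections preserves bijectivity.
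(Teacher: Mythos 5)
Your proof is correct and follows essentially the same route as the paper's: both transfer bijectivity between the source and range fibres via the inversion bijections, using the fact that $\varphi$ commutes with inversion. If anything, your commuting square is slightly more careful than the paper's one-line identification $\left.\varphi\right|_{\g_x}\circ{}^{-1}=\left.\varphi\right|_{\g^x}$, which tacitly suppresses the inversion needed on the $\mathcal{H}$ side to land in $\mathcal{H}^{\varphi(x)}$ rather than $\mathcal{H}_{\varphi(x)}$.
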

\begin{proof}
Take $x\in \g^{(0)}$. From the definitions, one sees that $(\g_x)^{-1} = \{\gamma^{-1}\mid \gamma\in \g_x\} = \g^x$. Because groupoid homomorphisms commute with the inverse map, it is also clear that $\left.\varphi\right|_{\g_x}\circ ^{-1} = \left.\varphi\right|_{\g^x}$. If $\varphi: \g_x \to \mathcal{H}_{\varphi(x)}$ is bijective, then so is $\left.\varphi\right|_{\g_x}\circ ^{-1} = \left.\varphi\right|_{\g^x}$. The other direction is analogous.
\end{proof}

\begin{prop}\label{dbijectiveamenable}
Suppose that $\g$ and $\mathcal{H}$ are \'etale groupoids, and that $\p: \g\to \mathcal{H}$ is a $d$-bijective Borel map. If $\mathcal{H}$ is amenable, then so is $\g$.
\end{prop}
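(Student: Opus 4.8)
The plan is to pull back to $\g$ the functions witnessing amenability of $\mathcal{H}$, using characterization (3) of Theorem \ref{amenablegroupoid}, since that version is phrased in terms of the source fibers $\g_x = d^{-1}(x)$ --- precisely the sets on which $\p$ is assumed to be bijective. Since $\mathcal{H}$ is amenable, Theorem \ref{amenablegroupoid} provides a sequence $(f_n)$ of Borel functions on $\mathcal{H}$ satisfying conditions (3a)--(3c). I would then set $g_n := f_n \circ \p$, which are Borel because $\p$ and each $f_n$ are Borel, and claim that they witness amenability of $\g$.

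The verification rests on a single reindexing observation: for each $x \in \g^{(0)}$ the map $\p$ restricts to a bijection $\g_x \to \mathcal{H}_{\p(x)}$, so summing a function of the form $h\circ\p$ over the fiber $\g_x$ equals summing $h$ over $\mathcal{H}_{\p(x)}$. Applying this to $f_n$ and to $|f_n|$ gives
\[
\sum_{d(\gamma)=x}|g_n(\gamma)| = \sum_{d(\eta)=\p(x)}|f_n(\eta)|, \qquad \sum_{d(\gamma)=x}g_n(\gamma) = \sum_{d(\eta)=\p(x)}f_n(\eta),
\]
so that (3a) and (3b) for $(g_n)$ follow at once from (3a) and (3b) for $(f_n)$, noting that $\p(x)\in\mathcal{H}^{(0)}$.

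For the invariance condition (3c), I would fix $\gamma \in \g$ and let $\eta$ range over $\g_{d(\gamma)}$, which is exactly the index set $\{\eta : d(\eta)=d(\gamma)\}$ and the set on which $\eta\gamma^{-1}$ is defined. Because $\p$ is a homomorphism commuting with inversion, $\p(\eta\gamma^{-1}) = \p(\eta)\p(\gamma)^{-1}$, so
\[
\sum_{d(\eta)=d(\gamma)}\bigl|g_n(\eta\gamma^{-1}) - g_n(\eta)\bigr| = \sum_{d(\eta)=d(\gamma)}\bigl|f_n(\p(\eta)\p(\gamma)^{-1}) - f_n(\p(\eta))\bigr|.
\]
Reindexing the fiber $\g_{d(\gamma)}$ bijectively onto $\mathcal{H}_{\p(d(\gamma))} = \mathcal{H}_{d(\p(\gamma))}$ via $\p$ and writing $\mu := \p(\gamma)$, the right-hand side becomes $\sum_{d(\xi)=d(\mu)}|f_n(\xi\mu^{-1}) - f_n(\xi)|$, which tends to $0$ by (3c) for $(f_n)$. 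Hence $(g_n)$ satisfies all three conditions and $\g$ is amenable.

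I expect the only delicate point to be the bookkeeping in (3c): one must check that the $d$-bijection on the fiber $\g_{d(\gamma)}$ is compatible with right translation by $\gamma^{-1}$, that is, that transporting $\eta\mapsto\eta\gamma^{-1}$ through $\p$ coincides with right-translating the image by $\mu^{-1}=\p(\gamma)^{-1}$. This is exactly where the homomorphism property and $d$-bijectivity must cooperate; everything else is a routine transfer of the defining estimates from $\mathcal{H}$ to $\g$.
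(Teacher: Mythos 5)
Your proposal is correct and follows essentially the same route as the paper's own proof: pulling back the witnessing sequence via $g_n = f_n\circ\p$, using $d$-bijectivity to reindex fiber sums for conditions (3a) and (3b), and combining the homomorphism property with the fiber bijection $\g_{d(\gamma)}\to\mathcal{H}_{d(\p(\gamma))}$ to transfer the invariance condition (3c). The ``delicate point'' you flag is handled exactly as you describe, so nothing is missing.
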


\begin{proof}
Suppose that we have $\varphi: \g \to \mathcal{H}$ as in the statement, and that $(f_n)$ is a sequence of functions on $\mathcal{H}$ as in Theorem \ref{amenablegroupoid}.3. For each $n\in \NN$, define $h_n$ on $\g$ by
\[
h_n := f_n\circ \varphi.
\]
Because $\varphi$ and the $f_n$ Borel, the $h_n$ are Borel as well. Take $x\in \g^{(0)}$, and calculate
\begin{eqnarray*}
\sum_{\gamma\in \g_x}h_n(\gamma) &= & \sum_{\gamma\in \g_x} f_n(\varphi(\gamma))\\
& =&\sum_{\nu\in \mathcal{H}_{\varphi(x)}} f_n(\nu)\\
&=& 1.
\end{eqnarray*}
The second line above is due to the fact that $\varphi$ is a bijection between $\g_x$ and $\mathcal{H}_{\varphi(x)}$. Hence, the $h_n$ satisfy Theorem \ref{amenablegroupoid}.3(b). A similar calculation shows that the $h_n$ satisfy Theorem \ref{amenablegroupoid}.3(a).

Finally, fix $\gamma\in \g$ and let $x = d(\gamma)$. We calculate
\begin{eqnarray*}
\sum_{\eta\in \g_x}\left|h_n(\eta\gamma^{-1}) - h_n(\eta)\right| &=&\sum_{\eta\in \g_x}\left|f_n(\varphi(\eta\gamma^{-1})) - f_n(\varphi(\eta))\right|\\
&=& \sum_{\varphi(\eta)\in \mathcal{H}_{\varphi(x)}} \left|f_n(\varphi(\eta)\varphi(\gamma^{-1})) - f_n(\varphi(\eta))\right|\\
&=& \sum_{\nu\in \mathcal{H}_{\varphi(x)}} \left|f_n(\nu\varphi(\gamma^{-1})) - f_n(\nu)\right|\\
&\stackrel{n\to \infty}{\to}& 0.
\end{eqnarray*}
Again, the third line is due to the fact that $\varphi$ is a bijection between $\g_x$ and $\mathcal{H}_{\varphi(x)}$. Thus, the $h_n$ satisfy Theorem \ref{amenablegroupoid}.3(c) and we are done.
\end{proof}
We are grateful to the referee for pointing out that the above can also be seen as a corollary of \cite[Theorem 5.3.14]{AR00}.

\section{Amenable inverse semigroup actions}
In this section we define inverse semigroups and their actions. To each action of an inverse semigroup one may associate an \'etale groupoid, and we will say that an action is amenable if the groupoid associated to it is amenable in the sense of the last section. We then use Proposition \ref{dbijectiveamenable} to prove our main result: that if a certain universal action of an inverse semigroup is amenable, then all of its actions are amenable.

By an {\em inverse semigroup} we mean a semigroup $\s$ such that for each $s\in\s$ there is a unique $s^*\in\s$ such
that $s^*ss^* = s^*$ and $ss^*s = s$. For convenience, we will also assume that there is an element $0\in\s$ such that
$0s = s0 = 0$ for all $s\in\s$. If a given inverse semigroup does not have a zero element (such as in the case of a
group), one may simply adjoin a zero element to it and extend the multiplication in the obvious way -- the resulting semigroup is an inverse semigroup with zero. An element $e\in \s$ is called an {\em idempotent} if $e^2 = e$. For each $s\in \s$, the elements $s^*s$ and $ss^*$ are idempotents. The set of idempotents forms a commutative subsemigroup of $\s$. From now on we will denote by $E$ the set of idempotent elements of an inverse semigroup without making specific reference to the inverse semigroup.  Any inverse semigroup we consider is assumed to be countable and discrete.

Let $X$ be a set. Then the {\em symmetric inverse monoid} on $X$ is the set $\mathcal{I}(X)$ of bijections between subsets of $X$:
\[
\mathcal{I}(X) = \{ f: U\to V\mid U, V\subset X, f\text{ bijective}\}.
\]
This becomes an inverse semigroup when given the operation of composition of functions on largest domain possible. If $f, g\in \mathcal{I}(X)$ are such that the range of $g$ does not intersect the domain of $f$, then the product $fg$ is the empty function, which is the zero element of $\mathcal{I}(X)$.

We now recall the definition of this note's principal object of study.
\begin{defn}
An {\em action} of an inverse semigroup $\s$ on a locally compact Hausdorff space $X$ is a semigroup homomorphism
$\alpha: \s \to \mathcal{I}(X)$ such that for each $s$, the map $\alpha_s$ is continuous and its domain is open in $X$,
and the union of all the domains of the $\theta_s$ coincides with $X$. We also require that $\alpha_0$ is the empty map on $X$. If $\alpha$ is an action of $\s$ on $X$, we will write $\alpha: \s \curvearrowright X$.
\end{defn}
We note that for an action $\alpha: \s \curvearrowright X$ and an idempotent $e\in E$, the map $\alpha_e$ is necessarily the identity map on its domain, which we denote $D_e^\alpha\subset X$. Furthermore, for $s\in \s$, the domain of the function $\alpha_s$ coincides with the domain of the idempotent $\alpha_{s^*s}$, and so we write  $\alpha_s: D^\alpha_{s^*s}\to D^\alpha_{ss^*}$. For each $s\in \s$, the inverse of $\alpha_s$ is $\alpha_{s^*}$ which is continuous by definition, and so each $\alpha_s$ is a homeomorphism.

Given an action $\alpha$ of an inverse semigroup $\s$ on a space $X$ we can form a groupoid which encodes the action. The {\em groupoid of germs} for such an action is
\begin{equation}\label{germs}
\g(\alpha) = \{ [s,x] \mid s\in\s, x\in D^\alpha_{s^*s}\}
\end{equation}
where two elements $[s,x]$ and $[t,y]$ are equal if and only if $x=y$ and there exists $e\in E$ such that $x\in D^\alpha_e$ and $se = te$. The groupoid operations are given by
\[
[s,x]^{-1} = [s^*, \alpha_s(x)], \hspace{0.5cm} r([s,x]) = \alpha_s(x), \hspace{0.5cm} d([s,x]) = x, \hspace{0.5cm} [t, \alpha_s(x)][s,x] = [ts, x].
\]
For $s\in \s$ and an open set $U\subset D_{s^*s}^\alpha$, let
\[
\Theta(s, U) = \{[s, x]\mid x\in U\}.
\]
Sets of this form generate a topology on $\g(\alpha)$, and under this topology $\g(\alpha)$ is \'etale, and because $\s$ is countable, $\g(\alpha)$ is second countable. For a more detailed discussion of inverse semigroup actions and groupoids of germs, the interested reader is directed to \cite[\S 4]{Ex08}.

Given this construction, we make the following definition.
\begin{defn}\label{amenableactiondef}
We say that an action $\alpha$ of an inverse semigroup $\s$ on a locally compact Hausdorff space $X$ is {\em amenable} if the groupoid of germs $\g(\alpha)$ is amenable.
\end{defn}

An inverse semigroup $\s$ acts on an intrinsic space built from a natural order structure on its idempotent set. For $e,
f\in E$, we write $e\leqslant f$ if $ef = e$; this defines a partial order on $E$.
A {\em filter} in $E$ is a nonempty subset $\xi\subset E$ which does not contain the zero element, is closed under the product, and such that if $e\in \xi$ and $f\in E$ with $e\leqslant f$, then $f\in \xi$.
  The set of all filters will be denoted $\Ef$, and can be viewed as a subset of the product space $\{0,1\}^E$. We give
$\Ef$ the relative topology from this space. If $X, Y\subset E$ are finite subsets of $E$, define \[ U(X, Y) = \{ \xi\in
\Ef\mid X\subset \xi, Y\cap \xi = \varnothing\}.  \] The collection of such sets forms a basis for the topology on
$\Ef$. The space $\Ef$ is called the {\em spectrum} of $\s$.

We now define the intrinsic action $\theta$ of $\s$ on $\Ef$. For $e\in E$, let
\[
D_e^\theta:= U(\{e\}, \varnothing) = \{\xi\in \Ef\mid e\in \xi\}
\]
and define $\theta_s: D^\theta_{s^*s}\to D^\theta_{ss^*}$ by
\[
\theta_s(\xi) = \{e\in E\mid sfs^*\leqslant e\text{ for some }f\in \xi\}.
\]
The groupoid of germs for this action $\g(\theta)$ is sometimes called the {\em universal groupoid} for $\s$. This groupoid was defined in \cite{Pa99}.

We can now state our main theorem.
\begin{theo}\label{maintheorem}
Let $\s$ be an inverse semigroup. Then the following are equivalent.
\begin{enumerate}\addtolength{\itemsep}{-0.5\baselineskip}
\item[(a)] The canonical action $\theta: \s\curvearrowright \Ef$ is amenable.
\item[(b)] Every action of $\s$ on a locally compact Hausdorff space is amenable.
\end{enumerate}
\end{theo}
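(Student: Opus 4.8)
The plan is to prove the two implications separately, with (b)$\Rightarrow$(a) being immediate and (a)$\Rightarrow$(b) carrying all the content. For (b)$\Rightarrow$(a) I would simply observe that $\theta$ is itself an action of $\s$ on the locally compact Hausdorff space $\Ef$, so its amenability is a special case of the hypothesis in (b). For (a)$\Rightarrow$(b), let $\alpha:\s\curvearrowright X$ be an arbitrary action. By Definition \ref{amenableactiondef} and Proposition \ref{dbijectiveamenable}, it suffices to exhibit a $d$-bijective Borel groupoid homomorphism $\Phi:\g(\alpha)\to\g(\theta)$, since $\g(\theta)$ is amenable by (a). This map will be induced by a natural point map $q:X\to\Ef$ sending $x$ to $\xi_x := \{e\in E\mid x\in D_e^\alpha\}$.

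The first step is to check that $\xi_x$ is genuinely a filter: it omits $0$ because $D_0^\alpha=\varnothing$, and it is closed under products and upward closed because $D_{ef}^\alpha = D_e^\alpha\cap D_f^\alpha$ while $e\leqslant f$ forces $D_e^\alpha\subseteq D_f^\alpha$ (both facts come from $\alpha$ being a homomorphism with each $\alpha_e$ the identity on its domain). The second step is the equivariance $q(\alpha_s(x))=\theta_s(q(x))$ for $x\in D_{s^*s}^\alpha$, which I would verify by a direct computation matching the defining formula $\theta_s(\xi)=\{e\mid sfs^*\leqslant e\text{ for some }f\in\xi\}$ against the semigroup relations governing $D_e^\alpha$.

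With $q$ in hand I would set $\Phi([s,x]) := [s,\xi_x]$. Well-definedness and the homomorphism property follow from the equivariance of $q$ together with the observation that the germ relation in $\g(\alpha)$ maps into that of $\g(\theta)$: if $x\in D_e^\alpha$ and $se=te$, then $e\in\xi_x$ and $se=te$, which is exactly the relation identifying $[s,\xi_x]$ with $[t,\xi_x]$. The crucial step is $d$-bijectivity. Fixing a unit $x$, an element of $\g(\theta)_{\xi_x}$ has the form $[s,\xi_x]$ with $s^*s\in\xi_x$, equivalently $x\in D_{s^*s}^\alpha$, so it is the image of $[s,x]\in\g(\alpha)_x$, giving surjectivity. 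For injectivity, $\Phi([s,x])=\Phi([t,x])$ means there is $e\in E$ with $e\in\xi_x$ (that is, $x\in D_e^\alpha$) and $se=te$, which is precisely the condition making $[s,x]=[t,x]$ in $\g(\alpha)$. Thus $\Phi$ restricts to a bijection $\g(\alpha)_x\to\g(\theta)_{\xi_x}$.

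It remains to confirm the regularity hypothesis of Proposition \ref{dbijectiveamenable}, and this is where I expect the one genuine subtlety. The map $q$ is generally \emph{not} continuous: the preimage $q^{-1}(U(\{e\},\varnothing))=D_e^\alpha$ is open, but $q^{-1}$ of the subbasic set $\{\xi\mid f\notin\xi\}$ is the closed set $X\setminus D_f^\alpha$, so the ``downward'' half of the filter condition yields only Borel preimages. Since $E$ is countable the topology of $\Ef$ is countably generated by such sets, whence $q$ is Borel; covering $\g(\alpha)$ by the countably many open bisections $\Theta(s,D_{s^*s}^\alpha)$, on each of which $\Phi$ factors through $q$ and homeomorphisms, shows $\Phi$ is Borel. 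Proposition \ref{dbijectiveamenable} then gives that $\g(\alpha)$ is amenable, i.e. $\alpha$ is amenable, completing (a)$\Rightarrow$(b). The main obstacle is thus not a hard estimate but the recognition that continuity fails and that the Borel version of Proposition \ref{dbijectiveamenable}, exactly as stated, is what the argument needs.
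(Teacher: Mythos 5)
Your proposal is correct and follows essentially the same route as the paper: your point map $q$ is exactly the paper's $\rho$ of \eqref{rhodef}, your induced map $\Phi$ is the paper's $\tilde\rho$, and the verification of the filter property, equivariance, $d$-bijectivity, and Borel measurability (including the observation that continuity fails and only Borelness is needed) reproduces Lemmas \ref{rhofacts}, \ref{rhohomo}, and \ref{borellemma} before invoking Proposition \ref{dbijectiveamenable}.
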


Of course, the difficult part of the above is to prove that the groupoid of germs of a given action $\alpha$ is amenable assuming that $\g(\theta)$ is. To do this, for any action $\alpha:\s\curvearrowright X$ we produce a $d$-bijective map from $\g(\alpha)$ to $\g(\theta)$ and appeal to Proposition \ref{dbijectiveamenable}. 

{\bf For the duration of this paper, we fix an inverse semigroup $\s$ and an action $\alpha:\s\curvearrowright X$ of $\s$.}

For each $x\in X$, the relation $s\sim_x t$ if and only if $[s,x] = [t,x]$ is an equivalence relation on the set of all $s$ in $\s$ such that $x\in D^\alpha_{s^*s}$.   The equivalence class of an element $s$ will be denoted $[s]_x^\alpha$ and the
set of all equivalence classes will be denoted $[\s]^\alpha_x$. The set $[\s]^\alpha_x$ can be thought of as a partition of the set $\{s\in\s\mid x\in D^\alpha_{s^*s}\}$.

We define a map $\rho: X\to \Ef$ by
\begin{equation}\label{rhodef}
\rho(x) = \{ e\in E\mid x\in D^\alpha_e\}.
\end{equation}
It is immediate that for each $x\in X$, the set $\rho(x)$ is a filter in $E$. We also have the following facts about $\rho$:

\begin{lem}\label{rhofacts}
Let $\rho:X \to \Ef$ be as in \eqref{rhodef}. Then:
\begin{enumerate}\addtolength{\itemsep}{-0.5\baselineskip}
\item For all $x\in X$ and $e\in E$, $x\in D_e^\alpha$ if and only if $\rho(x)\in D_e^\theta$.
\item For all $x\in X$, we have $\{s\in\s\mid x\in D_{s^*s}^\alpha\} = \{s\in\s\mid \rho(x)\in D_{s^*s}^\theta\}$.
\item For all $x\in X$ and $s, t$ in the set referred to above,  we have that $[s]_x^\alpha = [t]_x^\alpha$ if and only if $[s]_{\rho(x)}^\theta = [t]_{\rho(x)}^\theta$.
\item For all $x\in X$ and $s\in \s$ with $x\in D_{s^*s}^\alpha$, we have that $\rho(\alpha_s(x)) = \theta_s(\rho(x))$.
\end{enumerate}
\end{lem}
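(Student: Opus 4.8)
The plan is to treat the four statements in increasing order of difficulty, observing that the first three are essentially formal consequences of the definition of $\rho$ together with the definitions of the germ equivalence relations, so that all the genuine content lies in statement (4).

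For statement (1), unwinding definitions immediately gives $\rho(x)\in D_e^\theta \iff e\in\rho(x) \iff x\in D_e^\alpha$, so there is nothing to prove beyond a definition chase; statement (2) is then just the special case of (1) in which $e$ is restricted to idempotents of the form $s^*s$. For statement (3), I would compare the two equivalence conditions side by side: $[s]_x^\alpha=[t]_x^\alpha$ holds exactly when there is an $e\in E$ with $x\in D_e^\alpha$ and $se=te$, while $[s]_{\rho(x)}^\theta=[t]_{\rho(x)}^\theta$ holds exactly when there is an $e\in E$ with $\rho(x)\in D_e^\theta$ and $se=te$. The purely algebraic condition $se=te$ is identical in the two groupoids, and statement (1) identifies the admissible $e$'s, so the two conditions coincide and (3) follows.

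The substance is statement (4), which I would prove by showing that the sets $\rho(\alpha_s(x))$ and $\theta_s(\rho(x))$ contain one another. Here $e\in\rho(\alpha_s(x))$ means $\alpha_s(x)\in D_e^\alpha$, whereas $e\in\theta_s(\rho(x))$ means there is an $f\in E$ with $x\in D_f^\alpha$ and $sfs^*\leqslant e$. For the inclusion $\rho(\alpha_s(x))\subseteq\theta_s(\rho(x))$, given $\alpha_s(x)\in D_e^\alpha$ I would exhibit the explicit witness $f:=s^*es$. A short computation in the semilattice $E$ shows that $f$ is idempotent and that $sfs^*=e\,ss^*\leqslant e$; and, writing $\alpha_f=\alpha_{s^*}\circ\alpha_e\circ\alpha_s$ as a composite of partial homeomorphisms, one checks that $x$ lies in its domain, since $x\in D_{s^*s}^\alpha$, $\alpha_s(x)\in D_e^\alpha$, and $\alpha_e$ fixes $\alpha_s(x)$; hence $f\in\rho(x)$ and $e\in\theta_s(\rho(x))$. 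For the reverse inclusion, given $f\in\rho(x)$ with $sfs^*\leqslant e$, I would note that $sfs^*=(sf)(sf)^*$ is idempotent and that $sfs^*\leqslant e$ yields $D_{sfs^*}^\alpha\subseteq D_e^\alpha$; then, writing $\alpha_{sfs^*}=\alpha_s\circ\alpha_f\circ\alpha_{s^*}$, one verifies that $\alpha_s(x)$ lies in its domain by applying $\alpha_{s^*}$ to return to $x$ and using $x\in D_f^\alpha$, so that $\alpha_s(x)\in D_{sfs^*}^\alpha\subseteq D_e^\alpha$ and $e\in\rho(\alpha_s(x))$.

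The main obstacle is the domain bookkeeping in statement (4): one must use the homomorphism property of $\alpha$ together with the fact that each $\alpha_e$ is the identity on $D_e^\alpha$ to track precisely which set each composition is defined on, and to verify the elementary identities $f^2=f$ and $sfs^*=e\,ss^*\leqslant e$ for the chosen witness. I would also record the order-monotonicity of domains, namely that $g\leqslant e$ implies $D_g^\alpha\subseteq D_e^\alpha$ (which follows from $\alpha_g=\alpha_{ge}=\alpha_g\circ\alpha_e$ together with $\alpha_e$ being the identity on its domain), since this is exactly what powers the reverse inclusion.
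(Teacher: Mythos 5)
Your proposal is correct and follows essentially the same route as the paper: parts (1)--(3) by the same definition chase, and part (4) by mutual inclusion using the same witness $f = s^*es$ for the forward direction and domain monotonicity ($sfs^*\leqslant e$ implies $D_{sfs^*}^\alpha\subseteq D_e^\alpha$) for the reverse. Your version merely makes explicit the domain bookkeeping and the verification of $f^2 = f$, $sfs^*\leqslant e$ that the paper leaves implicit.
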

\begin{proof}
\begin{enumerate}\addtolength{\itemsep}{-0.5\baselineskip}
\item For $x\in X$, $x\in D_e^\alpha$ if and only if $e\in \rho(x)$, which is equivalent to saying that $\rho(x)\in D_e^\theta$.
\item This is a direct consequence of 1.
\item Take $x\in X$ and $s, t\in \s$ such that $x\in D^\alpha_{s^*s}\cap D^\alpha_{t^*t}$. Then $[s, x] = [t, x]$ if and only if there exists $e\in E$ such that $x\in D^\alpha_e$ and $se = te$, which by 1 is equivalent to $x\in D^\theta_e$ and $se = te$.
\item If $e\in \rho(\alpha_s(x))$, then $\alpha_s(x)\in D_e^\alpha$, which implies that $x\in D_{s^*es}^\alpha$, and so $\rho(x)\in D_{s^*es}^\theta$. Hence, $\theta_s(\rho(x)) \in D_{ss^*ess^*}^\theta\subset D_{e}^\theta$, so we have that $e\in \theta_s(\rho(x))$.

Conversely, if $e\in \theta_s(\rho(x))$, then there is an idempotent $f$ such that $x\in D_f^\alpha$ and $sfs^* \leqslant e$, which is to say $sfs^*e = sfs^*$. This implies that $D_{sfs^*}^\alpha\subset D_e^\alpha$, and so $\alpha_s(x)\in D_{e}^\alpha$, whence $e\in \rho(\alpha_s(x))$.
\end{enumerate}
\end{proof}

The map $\rho$ induces a map $\tilde\rho: \g(\alpha) \to \g(\theta)$ defined by $\tilde\rho([s,x]) = [s, \rho(x)]$. This map is well-defined by Lemma \ref{rhofacts}.3.
\begin{lem}\label{rhohomo}
The map $\tilde\rho: \g(\alpha) \to \g(\theta)$ is a $d$-bijective groupoid homomorphism.
\end{lem}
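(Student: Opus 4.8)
The plan is to verify the two defining properties directly: that $\tilde\rho$ respects composition, and that it restricts to a bijection on each source fibre. Well-definedness of $\tilde\rho$ is already guaranteed by Lemma \ref{rhofacts}.3, so I may freely manipulate germs without worrying about representatives.

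First I would check that $\tilde\rho$ is a groupoid homomorphism. A typical composable pair in $\g(\alpha)$ has the form $([t,\alpha_s(x)],[s,x])$, with product $[ts,x]$. Applying $\tilde\rho$ to the first entry gives $[t,\rho(\alpha_s(x))]$, and the key move is to invoke Lemma \ref{rhofacts}.4 to rewrite $\rho(\alpha_s(x)) = \theta_s(\rho(x))$. This is exactly what is needed to confirm that $([t,\theta_s(\rho(x))],[s,\rho(x)])$ is itself a composable pair in $\g(\theta)$, since then $d([t,\theta_s(\rho(x))]) = \theta_s(\rho(x)) = r([s,\rho(x)])$. Its product is $[ts,\rho(x)]$, which equals $\tilde\rho([ts,x])$, so $\tilde\rho(\gamma\eta) = \tilde\rho(\gamma)\tilde\rho(\eta)$ and composable pairs go to composable pairs.

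Next I would prove $d$-bijectivity. Since $d([s,x]) = x$ while $d(\tilde\rho([s,x])) = d([s,\rho(x)]) = \rho(x)$, the map $\tilde\rho$ sends the fibre $\g(\alpha)_x$ into $\g(\theta)_{\rho(x)}$, and it remains to see that this restriction is a bijection. For surjectivity, given $[s,\rho(x)] \in \g(\theta)_{\rho(x)}$, Lemma \ref{rhofacts}.2 shows that $\rho(x) \in D^\theta_{s^*s}$ forces $x \in D^\alpha_{s^*s}$, so $[s,x]$ is a legitimate element of $\g(\alpha)_x$ mapping onto it. For injectivity, if $\tilde\rho([s,x]) = \tilde\rho([t,x])$, then $[s]^\theta_{\rho(x)} = [t]^\theta_{\rho(x)}$, which by Lemma \ref{rhofacts}.3 is equivalent to $[s]^\alpha_x = [t]^\alpha_x$, i.e.\ $[s,x] = [t,x]$.

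The argument is essentially bookkeeping, the one genuine input being Lemma \ref{rhofacts}.4: without the identity $\rho(\alpha_s(x)) = \theta_s(\rho(x))$ one could not even confirm that the image of a composable pair is composable, since the second coordinate of the first germ must be re-expressed in terms of $\rho(x)$ before it can be matched with the range of $[s,\rho(x)]$. I therefore expect the homomorphism step—correctly aligning the range of the inner germ with the coordinate produced by $\tilde\rho$—to be the only place demanding care; the fibrewise bijection then falls straight out of parts 2 and 3.
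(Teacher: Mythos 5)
Your proof is correct and follows essentially the same route as the paper: Lemma \ref{rhofacts}.4 to verify the homomorphism property on composable pairs, and Lemma \ref{rhofacts}.3 for injectivity on source fibres. Your explicit appeal to Lemma \ref{rhofacts}.2 in the surjectivity step is a small improvement in rigor over the paper, which uses that fact only implicitly.
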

\begin{proof}
We first check that $\tilde\rho$ is a groupoid homomorphism. We only have $([s,x], [t,y])\in \g(\alpha)^{(2)}$ if $y = \alpha_{t^*}(x)$. We calculate
\begin{eqnarray*}
\tilde\rho([s,x]) & = & [s, \rho(x)],\\
\tilde\rho([t,\alpha_{t^*}(x)]) & = & [t, \rho(\alpha_{t^*}(x))],\\
&=& [t, \theta_{t^*}(\rho(x))],
\end{eqnarray*}
and so $(\tilde\rho([s,x]), \tilde\rho([t,y]))\in \g(\theta)^{(2)}$. Furthermore,
\begin{eqnarray*}
\tilde\rho([s,x])\tilde\rho([t,\alpha_{t^*}(x)]) & = & [s, \rho(x)][t, \theta_{t^*}(\rho(x))]\\
& = & [st, \rho(\alpha_{t^*}(x))],\\
&=& \tilde\rho([st,\alpha_{t^*}(x)]),\\
&=& \tilde\rho([s,x][t,\alpha_{t^*}(x)]),
\end{eqnarray*}
whence $\tilde\rho$ is a groupoid homomorphism.

Now we show that $\tilde\rho: \g(\alpha)_x\to \g(\theta)_{\rho(x)}$ is a bijection for all $x\in X$. If $[s, \rho(x)]\in \g(\theta)_{\rho(x)}$, then $\tilde\rho([s, x]) = [s, \rho(x)]$, and so $\tilde\rho$ is surjective. Now, take $[s, x], [t, x]\in \g(\alpha)_x$, and suppose that $[s, \rho(x)]= [t, \rho(x)]$. This implies that $[s]^\theta_{\rho(x)} = [t]^\theta_{\rho(x)}$, which by Lemma \ref{rhofacts}.3 is equivalent to $[s]^\alpha_x = [t]^\alpha_x$, which gives us that $[s, x] = [t, x]$. Hence, $\tilde\rho:\g(\alpha)_x\to \g(\theta)_{\rho(x)}$ is bijective.
\end{proof}
\begin{lem}\label{borellemma}
 The maps $\rho: X\to \Ef$  and $\tilde\rho: \g(\alpha) \to \g(\theta)$ are Borel maps.
\end{lem}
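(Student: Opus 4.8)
The plan is to prove that $\rho$ is Borel directly from the definition of the topology on $\Ef$, and then to bootstrap to $\tilde\rho$ by exhibiting it, on each of the countably many bisections covering $\g(\alpha)$, as a composition of $\rho$ with continuous maps. For $\rho$ itself, I would first observe that since $\s$ (hence $E$) is countable, the product space $\{0,1\}^E$ is second countable, with a countable subbasis given by the cylinders $\{\xi : e\in\xi\}$ and $\{\xi : e\notin\xi\}$ for $e\in E$; intersecting with $\Ef$ yields a subbasis for its relative topology. It therefore suffices to check that the $\rho$-preimages of these subbasic sets are Borel. By the definition \eqref{rhodef}, $e\in\rho(x)$ holds exactly when $x\in D_e^\alpha$, so these preimages are $D_e^\alpha$ and $X\setminus D_e^\alpha$ respectively. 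The former is open, being the domain of $\alpha_e$, and the latter is closed, hence both are Borel; as every open subset of $\Ef$ is a countable union of finite intersections of subbasic sets, it follows that $\rho$ is Borel.

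For $\tilde\rho$, I would use that $\g(\alpha) = \bigcup_{s\in\s}\Theta(s, D_{s^*s}^\alpha)$ is a countable union of open bisections, on each of which the source map $d$ restricts to a homeomorphism onto $D_{s^*s}^\alpha$ with inverse $x\mapsto[s,x]$. Fixing $s$, for $[s,x]\in\Theta(s,D_{s^*s}^\alpha)$ Lemma \ref{rhofacts}.2 gives $\rho(x)\in D_{s^*s}^\theta$, and one can write $\tilde\rho([s,x]) = [s,\rho(x)] = \sigma_s(\rho(d([s,x])))$, where $\sigma_s\colon D_{s^*s}^\theta\to\g(\theta)$, $\xi\mapsto[s,\xi]$, is continuous. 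Thus the restriction of $\tilde\rho$ to $\Theta(s,D_{s^*s}^\alpha)$ equals the composition $\sigma_s\circ\rho\circ d$ of the continuous map $d$, the Borel map $\rho$, and the continuous map $\sigma_s$, and is therefore Borel. Since the cover is countable and each $\Theta(s,D_{s^*s}^\alpha)$ is open, for any Borel $B\subset\g(\theta)$ the set $\tilde\rho^{-1}(B)=\bigcup_{s\in\s}\bigl(\tilde\rho^{-1}(B)\cap\Theta(s,D_{s^*s}^\alpha)\bigr)$ is a countable union of Borel sets, so $\tilde\rho$ is Borel.

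I expect the only delicate points to be bookkeeping rather than substance: verifying that $\sigma_s$ is continuous (the standard fact that $\xi\mapsto[s,\xi]$ parametrizes the bisection $\Theta(s,D_{s^*s}^\theta)$ homeomorphically, which rests on $\{\xi : [s,\xi]=[t,\xi]\}$ being open in the germ topology), and checking that a Borel subset of the open piece $\Theta(s,D_{s^*s}^\alpha)$ remains Borel in $\g(\alpha)$. Neither is deep, so the real content is simply assembling the factorization $\sigma_s\circ\rho\circ d$ correctly and invoking the countability of $\s$ to pass from the individual bisections to all of $\g(\alpha)$.
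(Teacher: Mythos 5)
Your proof is correct, and for the $\tilde\rho$ half it takes a genuinely different route from the paper. The $\rho$ part is essentially identical to the paper's: both check preimages of the subbasic sets $D_e^\theta$ and their complements, obtaining $D_e^\alpha$ and $X\setminus D_e^\alpha$ (you are actually more careful than the paper in invoking countability of $E$ to pass from subbasic sets to all open sets, a point the paper leaves implicit). For $\tilde\rho$, however, the paper works in the codomain: it computes preimages of a generating family of sets in $\g(\theta)$ directly, showing $\tilde\rho^{-1}(\Theta(s, D_e^\theta)) = \Theta(s, D_e^\alpha)$ and $\tilde\rho^{-1}(\Theta(s, D_{s^*s}^\theta\setminus D_e^\theta)) = \Theta(s, D_{s^*s}^\alpha)\setminus \Theta(s, D_e^\alpha)$, and then appeals to these sets generating the topology of $\g(\theta)$. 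You instead decompose the domain $\g(\alpha)$ into the countably many open bisections $\Theta(s, D_{s^*s}^\alpha)$ and factor $\tilde\rho$ on each as $\sigma_s\circ\rho\circ d$, exploiting the \'etale structure. Your approach buys a cleaner reduction of the measurability of $\tilde\rho$ to that of $\rho$, and sidesteps having to identify and verify a generating family for the topology of $\g(\theta)$ (the step the paper treats somewhat tersely); the price is the need for the standard facts that $d$ restricts to a homeomorphism on each bisection and that $\sigma_s$ is continuous, both of which you correctly flag and both of which are routine consequences of the germ topology. The paper's approach is shorter and yields explicit preimage formulas, but yours is arguably more robust, since it would apply verbatim to any map that is locally conjugate to $\rho$ via bisections.
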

\begin{proof}
Sets of the form $D^\theta_e$ together with their complements form a subbasis for the topology on $\Ef$. Since the Borel sets form a $\sigma$-algebra, so we need only check that for all $e\in E$ the set $\rho^{-1}(D^\theta_e)$ is Borel. A short calculation shows that $\rho^{-1}(D^\theta_e) = D_e^\alpha$.

Now, suppose $s\in \s$ and we have $e\in E$ such that $D_e^\theta\subset D_{s^*s}^\theta$. Then 
\[
\tilde\rho^{-1}(\Theta(s, D_e^\theta)) =\Theta(s,D_e^\alpha)
\]
which is an open set. Furthermore, 
\[
\tilde\rho^{-1}(\Theta(s, D_{s^*s}^\theta\setminus D_e^\theta)) =\Theta(s,D_{s^*s}^\alpha)\setminus\Theta(s,D_e^\alpha)
\]
which is a Borel set. Sets of these types generate the topology of $\g(\theta)$, so $\tilde\rho$ is Borel.
\end{proof}
\begin{proof}[Proof of Theorem \ref{maintheorem}]
We need only prove that (a)$\Rightarrow$(b), because (b)$\Rightarrow$(a) is obvious. That (a)$\Rightarrow$(b) follows from Proposition \ref{dbijectiveamenable}, Lemma \ref{rhohomo}, and Lemma \ref{borellemma}.
\end{proof}

We close with two remarks regarding our result.

\begin{rmk}\label{weakcontainment}
To an \'etale groupoid $\g$ one can associate C*-algebras $C^*(\g)$ and $C^*_r(\g)$, called the {\em C*-algebra of $\g$} and the {\em reduced C*-algebra of $\g$} respectively. In this work we are not concerned with the specifics (and the interested reader is directed to \cite{R80} for more details), but there is always a surjective $*$-homomorphism $\lambda: C^*(\g)\to C^*_r(\g)$. If $\g$ is amenable, then $\lambda$ is an isomorphism, \cite[Proposition 6.1.8]{AR00}. 

{\em Weak containment} for an inverse semigroup was defined in \cite{DP85}, and in \cite{Mi10} it was argued that it is a good candidate for the definition of amenability for inverse semigroups. It is true that an inverse semigroup $\s$ with universal action $\theta$ satisfies weak containment if and only if the map $\lambda: C^*(\g(\theta))\to C^*_r(\g(\theta))$ is an isomorphism, see \cite[Theorem 4.4.2]{Pa99}. Hence, the equivalent conditions of Theorem \ref{maintheorem} imply weak containment, though it is not known whether the converse holds, see \cite[Remark 6.1.9]{AR00} and \cite{Wi15}.
\end{rmk}
\begin{rmk}\label{tightcounterexample}
Another intrinsic action of an inverse semigroup $\s$ is that on a subspace of $\Ef$, called the {\em tight spectrum} of $\s$. One considers in $\Ef$ the subset of all {\em ultrafilters}, that is, the filters which are not properly contained in another filter. The tight spectrum is then the closure in $\Ef$ of the set of all ultrafilters, and is denoted $\Et$. The action of $\s$ on $\Ef$ restricts to an action on $\Et$, and the resulting groupoid of germs is called the {\em tight groupoid} of $\s$ and is denoted $\gt(\s)$. For details of this construction, the reader is directed to \cite{Ex08}.

Our thought when setting out to investigate amenability of inverse semigroup actions was that perhaps the following entry could be added to Theorem \ref{maintheorem}:
\begin{enumerate}\addtolength{\itemsep}{-0.5\baselineskip}
\item[(c)]The canonical action $\theta: \s\curvearrowright \Et$ is amenable.
\end{enumerate}
This however is not true, as evidenced by the following counterexample which was relayed to us by Benjamin Steinberg.

Let $G$ be some discrete nonamenable group (such as the free group on two elements), and let $G^0$ denote the inverse
semigroup obtained by adjoining an ad-hoc zero element $0$ to $G$. Now, let $\s$ be the inverse semigroup obtained by
adjoining a further ad-hoc zero element $0'$ to $G^0$. The set of idempotents for this inverse semigroup is $E = \{1_G, 0, 0'\}$, and
\[
\Ef = \{\{1_G, 0\}, \{1_G\}\}
\]
\[
\Et = \{\{1_G, 0\}\}.
\]
Let $\xi = \{1_G, 0\}$ so that $\Et = \{\xi\}$. Suppose we have two germs $[s, \xi], [t, \xi]$ in $\gt(\s)$. We note that neither $s$ nor $t$ can be equal to $0'$. Since $0\in\xi$, $\xi\in D^\theta_0$, and $s0 = t0 = 0$, we have $[s, \xi] =[t, \xi]$ and so $\gt(\s)$ is the trivial (one-point) groupoid, hence amenable. However, $\g(\theta)$ is the union of the nonamenable group $G$ with a single point, and so is not amenable.
\end{rmk}

{\bf Acknowledgment:} We are thankful to Benjamin Steinberg for relaying to us the example in Remark \ref{tightcounterexample}, and to the referee for a careful reading.
\bibliography{C:/Users/Charles/Dropbox/Research/bibtex}{}
\bibliographystyle{alpha}
\end{document}